 \newtheorem{thm}{Theorem}[section]
 \newtheorem{cor}[thm]{Corollary}
\theoremstyle{definition}
 \newtheorem{rem}[thm]{Remark}
\newcommand{\N}{\mathbb{N}}
\newcommand{\R}{\mathbb{R}}
\newcommand{\C}{\mathbb{C}}
\newcommand{\st}{\;:\;}
\newcommand{\sspace}{\cdot}
\newcommand{\End}[1]{\mathrm{End}\left(#1\right)}
\newcommand{\paragrafo}[2]{\smallskip \noindent \texttt{Step {#1}} -- {\itshape #2}.\ }
\newcommand{\kth}[1]{\ifthenelse{\equal{#1}{1}}{$#1^\text{st}$}{\ifthenelse{\equal{#1}{2}}{$#1^\text{nd}$}{\ifthenelse{\equal{#1}{3}}{$#1^\text{rd}$}{$#1^\text{th}$}}}}
\newcommand{\correnti}{\mathcal{D}}
\DeclareMathOperator{\imm}{im}
\DeclareMathOperator{\de}{d}
\DeclareMathOperator{\id}{id}
\DeclareMathOperator{\GL}{GL}
\DeclareMathOperator{\Sing}{Sing}
\DeclareMathOperator{\pr}{pr}
\DeclareMathOperator{\ord}{ord}
\newcommand{\del}{\partial}
\newcommand{\delbar}{\overline{\del}}
\title{Cohomologies of certain orbifolds}
\author{Daniele Angella}
\address{Dipartimento di Matematica\\
Universit\`{a} di Pisa \\
Largo Bruno Pontecorvo 5, 56127\\ 
Pisa, Italy}
\email{angella@mail.dm.unipi.it}
\keywords{Bott-Chern cohomology, orbifolds, $\partial\overline{\partial}$-Lemma}
\thanks{This work was supported by GNSAGA of INdAM}
\subjclass[2010]{55N32, 32Q99, 32C15}
\begin{document}

\begin{abstract}
 We study the Bott-Chern cohomology of complex orbifolds obtained as quotient of a compact complex manifold by a finite group of biholomorphisms.
\end{abstract}

\maketitle

\section*{Introduction}

In order to investigate cohomological aspects of compact complex non-K\"ahler manifolds, and in particular with the aim to get results allowing to construct new examples of non-K\"ahler manifolds, we study the cohomology of complex orbifolds.

\medskip

Namely, an \emph{orbifold} (or \emph{V-manifold}, as introduced by I. Satake,  \cite{satake}) is a singular complex space whose singularities are locally isomorphic to quotient singularities $\left. \C^n\right\slash G$, for finite subgroups $G\subset \GL(n;\C)$, where $n$ is the complex dimension: in other words, local geometry of orbifolds reduces to local $G$-invariant geometry. A special case is provided by orbifolds of global-quotient type, namely, by orbifolds $\tilde X \;=\; \left. X \right\slash G$, where $X$ is a complex manifold and $G$ is a finite group of biholomorphisms of $X$; such orbifolds have been studied, among others, by D.~D. Joyce in constructing examples of compact manifolds with special holonomy, see \cite{joyce-jdg-1-2-g2, joyce-invent, joyce-jdg-spin7, joyce-ext}. As proven by I. Satake, and W.~L. Baily, from the cohomological point of view, one can adapt both the sheaf-theoretic and the analytic tools for the study of the de Rham and Dolbeault cohomology of complex orbifolds, \cite{
satake, baily, baily-2}.

In particular, an useful tool in studying the cohomological properties of non-K\"ahler manifolds is provided by the \emph{Bott-Chern cohomology}, that is, the bi-graded algebra
$$ H^{\bullet,\bullet}_{BC}(X) \;:=\; \frac{\ker \del \cap \ker \delbar}{\imm \del\delbar} \;.$$
While for compact K\"ahler manifolds $X$ one has that the Bott-Chern cohomology is naturally isomorphic to the Dolbeault cohomology, \cite[Lemma 5.15, Remark 5.16, 5.21, Lemma 5.11]{deligne-griffiths-morgan-sullivan}, in general, for compact non-K\"ahler manifolds $X$, the natural maps $H^{\bullet,\bullet}_{BC}(X)\to H^{\bullet,\bullet}_{\delbar}(X)$ and $H^{\bullet,\bullet}_{BC}(X)\to H^{\bullet}_{dR}(X;\C)$ induced by the identity are neither injective nor surjective. One says that a compact complex manifold \emph{satisfies the $\del\delbar$-Lemma} if every $\del$-closed $\delbar$-closed $\de$-exact form is $\del\delbar$-exact, that is, if the natural map $H^{\bullet,\bullet}_{BC}(X)\to H^{\bullet}_{dR}(X;\C)$ is injective; compact K\"ahler manifolds provide the main examples of complex manifolds satisfying the $\del\delbar$-Lemma, \cite[Lemma 5.11]{deligne-griffiths-morgan-sullivan}, other than motivations for their study.

\medskip

In this note, we study the \emph{Bott-Chern cohomology} of compact complex orbifolds $\tilde X = \left.X \right\slash G$ of global-quotient type, (where $X$ is a compact complex manifold and $G$ is a finite group of biholomorphisms of $X$,) that is, the bi-graded $\C$-algebra
$$ H^{\bullet,\bullet}_{BC}\left(\tilde X\right) \;:=\; \frac{\ker \del\cap \ker\delbar}{\imm \del\delbar} $$
where $\del\colon \wedge^{\bullet,\bullet}\tilde X\to \wedge^{\bullet+1,\bullet}\tilde X$ and $\delbar\colon \wedge^{\bullet,\bullet}\tilde X\to \wedge^{\bullet,\bullet+1}\tilde X$, and $\wedge^{\bullet,\bullet}\tilde X$ is the bi-graded $\C$-vector space of \emph{differential forms} on $\tilde X$, that is, of $G$-invariant differential forms on $X$. We prove the following result, see Theorem \ref{thm:bc}.

\smallskip
\noindent{\bfseries Theorem.} {\itshape
 Let $\tilde X=\left.X \right\slash G$ be a compact complex orbifold of complex dimension $n$, where $X$ is a compact complex manifold and $G$ is a finite group of biholomorphisms of $X$.
 For any $p,q\in\N$, there is a canonical isomorphism
 $$
  H^{p,q}_{BC}\left(\tilde X\right) \;\simeq\; \frac{\ker\left(\del \colon \correnti^{p,q}\tilde X \to \correnti^{p+1,q}\tilde X\right) \cap \ker \left(\delbar\colon \correnti^{p,q}\tilde X \to \correnti^{p,q+1}\tilde X\right)}{\imm\left(\del\delbar \colon \correnti^{p-1,q-1}\tilde X \to \correnti^{p,q}\tilde X\right)} \;,
 $$
 where $\correnti^{p,q}\tilde X$ denotes the space of currents of bi-degree $(p,q)$ on $\tilde X$, that is, the space of $G$-invariant currents of bi-degree $(p,q)$ on $X$.

 Furthermore, given a Hermitian metric on $\tilde X$ (that is, a $G$-invariant Hermitian metric on $X$), there are canonical isomorphisms
 $$ H^{\bullet,\bullet}_{BC}\left(\tilde X\right) \;\simeq\; \ker \tilde\Delta_{BC} \qquad \text{ and } \qquad H^{\bullet,\bullet}_{A}\left(\tilde X\right) \;\simeq\; \ker\tilde\Delta_A \;, $$
 where $\tilde\Delta_{BC}$ and $\tilde\Delta_{A}$ are the \kth{4} order self-adjoint elliptic differential operators
 $$ \tilde \Delta_{BC} \;:=\; \left(\del\delbar\right)\left(\del\delbar\right)^*+\left(\del\delbar\right)^*\left(\del\delbar\right)+\left(\delbar^*\del\right)\left(\delbar^*\del\right)^*+\left(\delbar^*\del\right)^*\left(\delbar^*\del\right)+\delbar^*\delbar+\del^*\del \;\in\; \End{\wedge^{\bullet, \bullet}\tilde X} $$
 and
 $$ \tilde\Delta_{A} \;:=\; \del\del^*+\delbar\delbar^*+\left(\del\delbar\right)^*\left(\del\delbar\right)+\left(\del\delbar\right)\left(\del\delbar\right)^*+\left(\delbar\del^*\right)^*\left(\delbar\del^*\right)+\left(\delbar\del^*\right)\left(\delbar\del^*\right)^* \;\in\; \End{\wedge^{\bullet, \bullet}\tilde X} \;. $$

 In particular, the Hodge-$*$-operator induces an isomorphism
 $$ H^{\bullet_1,\bullet_2}_{BC}\left(\tilde X\right) \;\simeq\; H^{n-\bullet_2,n-\bullet_1}_{A}\left(\tilde X\right) \;.$$
}
\smallskip

As regards the $\del\delbar$-Lemma for complex orbifolds, by adapting a result by R.~O. Wells in \cite{wells}, we get the following result, see Corollary \ref{cor:deldelbar-lemma}.

\smallskip
\noindent{\bfseries Corollary.} {\itshape
 Let $\tilde Y$ and $\tilde X$ be compact complex orbifolds of the same complex dimension, and let $\epsilon\colon \tilde Y\to \tilde X$ be a proper surjective morphism of complex orbifolds. If $\tilde Y$ satisfies the $\del\delbar$-Lemma, then also $\tilde X$ satisfies the $\del\delbar$-Lemma.
}
\smallskip

\medskip

\noindent{\sl Acknowledgments.} The author would like to warmly thank Adriano Tomassini, both for his constant support and encouragement, and for many useful discussions and suggestions. Thanks are also due to Marco Abate for several remarks that improved the presentation of this note.

\section{Preliminaries on orbifolds}\label{sec:orbifolds}

The notion of orbifold has been introduced by I. Satake in \cite{satake}, with the name of \emph{V-manifold}, and has been studied, among many others, by W.~L. Baily, \cite{baily, baily-2}.

In this section, we start by recalling the main definitions and some classical results concerning complex orbifolds and their cohomology, referring to \cite{joyce-red, joyce-ext, satake, baily, baily-2}.

\medskip

A \emph{complex orbifold of complex dimension $n$} is a singular complex space whose singularities are locally isomorphic to quotient singularities $\left. \C^n\right\slash G$, for finite subgroups $G\subset \GL(n;\C)$, \cite[Definition 2]{satake}.

By definition, an object (e.g., a \emph{differential form}, a \emph{Riemannian metric}, a \emph{Hermitian metric}) \emph{on a complex orbifold $\tilde X$} is defined locally at $x\in\tilde X$ as a $G_x$-invariant object on $\C^n$, where $G_x\subseteq\GL(n;\C)$ is such that $\tilde X$ is locally isomorphic to $\left. \C^n \right\slash G_x$ at $x$.

Given $\tilde X$ and $\tilde Y$ complex orbifolds, a \emph{morphism} $f\colon \tilde Y \to \tilde X$ \emph{of complex orbifolds} is a morphism of complex spaces given, locally at $y\in \tilde Y$, by a map $\left.\C^m\right\slash H_y \to \left.\C^n\right\slash G_{f(y)}$, where $\tilde Y$ is locally isomorphic to $\left.\C^m\right\slash H_y$ at $y$ and $\tilde X$ is locally isomorphic to $\left.\C^n\right\slash G_{f(y)}$ at $f(y)$.

\medskip

In particular, one gets a differential complex $\left(\wedge^\bullet \tilde X, \, \de\right)$, and a double complex $\left(\wedge^{\bullet,\bullet}\tilde X,\, \del,\, \delbar\right)$. Define the de Rham, Dolbeault, Bott-Chern, and Aeppli cohomology groups of $\tilde X$ respectively as
\begin{eqnarray*}
   H^\bullet_{dR}\left(\tilde X;\C\right) \;:=\; \frac{\ker \de}{\imm\de} \;, &\qquad&
   H^{\bullet,\bullet}_{\delbar}\left(\tilde X\right) \;:=\; \frac{\ker \delbar}{\imm \delbar} \;, \\[5pt]
   H^{\bullet,\bullet}_{BC}\left(\tilde X\right) \;:=\; \frac{\ker \del \cap \ker \delbar}{\imm \del\delbar} \;, &\qquad&
   H^{\bullet,\bullet}_{A}\left(\tilde X\right) \;:=\; \frac{\ker \del\delbar}{\imm \del + \imm \delbar} \;.
\end{eqnarray*}
The structure of double complex of $\left(\wedge^{\bullet, \bullet}\tilde X,\, \del,\, \delbar\right)$ induces naturally a spectral sequence $\left\{\left(E_r^{\bullet,\bullet},\, \de_r\right)\right\}_{r\in\N}$, called \emph{Hodge and Fr\"olicher spectral sequence of $\tilde X$}, such that $E_1^{\bullet,\bullet}\simeq H^{\bullet,\bullet}_{\delbar}\left(\tilde X\right)$ (see, e.g., \cite[\S2.4]{mccleary}). Hence, one has the \emph{Fr\"olicher inequality}, see \cite[Theorem 2]{frolicher},
$$ \sum_{p+q=k} \dim_\C H^{p,q}_{\delbar}\left(\tilde X \right) \;\geq\; \dim_\C H^k_{dR}\left(\tilde X;\C\right) \;,$$
for any $k\in\N$.

\medskip

Given a Riemannian metric on a complex orbifold $\tilde X$ of complex dimension $n$, one can consider the $\R$-linear Hodge-$*$-operator $*_g\colon \wedge^\bullet \tilde X \to \wedge^{2n-\bullet}\tilde X$, and hence the \kth{2} order self-adjoint elliptic differential operator $\Delta:=\left[\de,\, \de^*\right]:=\de\,\de^*+\de^*\,\de \in \End{\wedge^\bullet \tilde X}$.

Analogously, given a Hermitian metric on a complex orbifold $\tilde X$ of complex dimension $n$, one can consider the $\C$-linear Hodge-$*$-operator $*_g\colon \wedge^{\bullet_1, \bullet_2} \tilde X \to \wedge^{n-\bullet_2, n-\bullet_1}\tilde X$, and hence the \kth{2} order self-adjoint elliptic differential operator $\overline\square:=\left[\delbar,\, \delbar^*\right]:=\delbar\,\delbar^*+\delbar^*\,\delbar \in \End{\wedge^{\bullet,\bullet}\tilde X}$. Furthermore, in \cite[Proposition 5]{kodaira-spencer-3}, and \cite[\S2]{schweitzer}, the following \kth{4} order self-adjoint elliptic differential operators are defined:
$$ \tilde \Delta_{BC} \;:=\; \left(\del\delbar\right)\left(\del\delbar\right)^*+\left(\del\delbar\right)^*\left(\del\delbar\right)+\left(\delbar^*\del\right)\left(\delbar^*\del\right)^*+\left(\delbar^*\del\right)^*\left(\delbar^*\del\right)+\delbar^*\delbar+\del^*\del \;\in\; \End{\wedge^{\bullet, \bullet}\tilde X} $$
and
$$ \tilde\Delta_{A} \;:=\; \del\del^*+\delbar\delbar^*+\left(\del\delbar\right)^*\left(\del\delbar\right)+\left(\del\delbar\right)\left(\del\delbar\right)^*+\left(\delbar\del^*\right)^*\left(\delbar\del^*\right)+\left(\delbar\del^*\right)\left(\delbar\del^*\right)^* \;\in\; \End{\wedge^{\bullet, \bullet}\tilde X} \;. $$

As a matter of notation, given a compact complex orbifold $\tilde X$ of complex dimension $n$, denote the constant sheaf with coefficients in $\R$ over $\tilde X$ by $\underline{\R}_{\tilde X}$, the sheaf of germs of smooth functions over $\tilde X$ by $\mathcal{C}^{\infty}_{\tilde X}$, the sheaf of germs of $(p,q)$-forms (for $p,q\in\N$) over $\tilde X$ by $\mathcal{A}^{p,q}_{\tilde X}$, the sheaf of germs of $k$-forms (for $k\in\N$) over $\tilde X$ by $\mathcal{A}^{k}_{\tilde X}$, the sheaf of germs of bidimension-$(p,q)$-currents (for $p,q\in\N$) over $\tilde X$ by $\mathcal{D}_{\tilde X\, p,q}:=:\mathcal{D}^{n-p,n-q}_{\tilde X}$, the sheaf of germs of dimension-$k$-currents (for $k\in\N$) over $\tilde X$ by $\mathcal{D}_{\tilde X\, k}:=:\mathcal{D}^{2n-k}_{\tilde X}$, and the sheaf of holomorphic $p$-forms (for $p\in\N$) over $\tilde X$ by $\Omega^p_{\tilde X}$.

\medskip

The following result, concerning the de Rham cohomology of a compact complex orbifold, has been proven by I. Satake, \cite{satake}, and by W.~L. Baily, \cite{baily}.

\begin{thm}[{\cite[Theorem 1]{satake}, \cite[Theorem H]{baily}}]
 Let $\tilde X$ be a compact complex orbifold of complex dimension $n$.
 There is a canonical isomorphism
 $$ H^\bullet_{dR}\left(\tilde X;\R\right) \;\simeq\; \check H^\bullet\left(\tilde X; \underline{\R}_{\tilde X}\right) \;.$$

 Furthermore, given a Riemannian metric on $\tilde X$, there is a canonical isomorphism
 $$ H^\bullet_{dR}\left(\tilde X;\R\right) \;\simeq\; \ker \Delta \;.$$

 In particular, the Hodge-$*$-operator induces an isomorphism
 $$ H^\bullet_{dR}\left(\tilde X;\R\right) \;\simeq\; H^{2n-\bullet}_{dR}\left(\tilde X;\R\right) \;.$$
\end{thm}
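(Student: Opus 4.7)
The plan is to follow the classical Satake--Baily strategy, passing from the local equivariant picture on $\C^n$ to a global statement on $\tilde X$ via sheaf theory and elliptic theory for orbifolds.

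\textbf{Step 1: The sheaf-theoretic isomorphism.} First I would construct a fine acyclic resolution of the constant sheaf $\underline{\R}_{\tilde X}$ by the sheaves $\mathcal{A}^{\bullet}_{\tilde X}$, so that Cartan--Leray's machinery yields $\check H^\bullet\bigl(\tilde X;\underline{\R}_{\tilde X}\bigr)\simeq H^{\bullet}_{dR}\bigl(\tilde X;\R\bigr)$. To this end I need two local inputs. First, fineness of $\mathcal{A}^{k}_{\tilde X}$: on an orbifold chart $\C^n/G_x$ one averages an ordinary smooth partition of unity over $G_x$ to obtain a $G_x$-invariant one, which descends to $\tilde X$. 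Second, exactness of the orbifold de Rham complex, i.e.\ an orbifold Poincar\'e lemma: if $\omega$ is a $G$-invariant closed $k$-form on a $G$-invariant star-shaped neighbourhood of the origin in $\C^n$, the usual Poincar\'e homotopy produces a primitive $\alpha$, and $\frac{1}{|G|}\sum_{g\in G}g^*\alpha$ is then a $G$-invariant primitive. Exactness of $0\to \underline{\R}_{\tilde X}\to \mathcal{A}^0_{\tilde X}\to \mathcal{A}^1_{\tilde X}\to \cdots$ follows.

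\textbf{Step 2: Hodge decomposition.} Next I would develop elliptic theory for $\Delta$ on the orbifold $\tilde X$. The operator is locally given by a $G_x$-invariant elliptic operator on $\C^n$, so on each chart it lifts to a genuine elliptic operator on a smooth model. The key analytic step is to prove a G{\aa}rding inequality and construct a parametrix for $\Delta$ on $\tilde X$; this can be done chart-by-chart in the invariant setting (forms and distributions on $\tilde X$ being by definition $G_x$-invariant ones upstairs) and then assembled using the orbifold partition of unity from Step 1. Finite-dimensionality of $\ker\Delta$ and closedness of $\imm\Delta$ in the appropriate Sobolev topology then give the orthogonal decomposition
\[
\wedge^{\bullet}\tilde X \;=\; \ker\Delta \;\oplus\; \imm \de \;\oplus\; \imm \de^{*},
\]
from which $H^{\bullet}_{dR}(\tilde X;\R)\simeq \ker\Delta$ follows in the usual way by identifying the harmonic representative in each cohomology class.

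\textbf{Step 3: Poincar\'e duality.} Since the Hodge-$*$-operator commutes (up to sign) with $\de$ and $\de^{*}$ on each local chart, it commutes with $\Delta$, hence restricts to an $\R$-linear isomorphism $\ker\Delta\cap \wedge^{\bullet}\tilde X \xrightarrow{\sim} \ker\Delta\cap \wedge^{2n-\bullet}\tilde X$, which combined with Step~2 gives the duality $H^\bullet_{dR}(\tilde X;\R)\simeq H^{2n-\bullet}_{dR}(\tilde X;\R)$.

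\textbf{Main obstacle.} Steps 1 and 3 are essentially formal once one allows oneself to average over the local groups $G_x$; the genuinely hard point is Step~2, namely setting up elliptic theory on a space with quotient singularities. The cleanest route, due to Baily, is to use the fact that on each chart all constructions (Sobolev spaces, parametrix, smoothing operators) can be carried out equivariantly on the smooth cover $\C^n$ and then projected to the $G_x$-invariant part; the remaining labour is to check that these local constructions patch coherently under the orbifold partition of unity and produce a compact parametrix globally on $\tilde X$.
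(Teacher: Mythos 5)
Your proposal is correct and is essentially the classical Satake--Baily argument: the paper itself gives no proof of this statement but quotes it from \cite[Theorem 1]{satake} and \cite[Theorem H]{baily}, whose proofs proceed exactly as you outline --- averaging over the local groups $G_x$ to obtain the fine resolution and the orbifold Poincar\'e lemma, Baily's decomposition theorem for the elliptic-theory step, and the Hodge-$*$-operator commuting with $\Delta$ for the duality. The averaging trick you rely on is the same one the paper deploys elsewhere (in the proof of Theorem \ref{thm:bc} and in the subsequent Remark), so your route is fully consistent with the methods of the paper and of its cited sources.
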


The isomorphism $H^\bullet_{dR}\left(\tilde X;\R\right) \simeq \ker \Delta$ can be seen as a consequence of a more general decomposition theorem on compact orbifolds, \cite[Theorem D]{baily}, which holds for \kth{2} order self-adjoint elliptic differential operators. In particular, as regards the Dolbeault cohomology, the following result by W.~L. Baily, \cite{baily-2, baily}, holds.

\begin{thm}[{\cite[page 807]{baily-2}, \cite[Theorem K]{baily}}]
 Let $\tilde X$ be a compact complex orbifold of complex dimension $n$.
 There is a canonical isomorphism
 $$ H^{\bullet_1,\bullet_2}_{\delbar}\left(\tilde X\right) \;\simeq\; \check H^{\bullet_2} \left(\tilde X; \Omega^{\bullet_1}_{\tilde X}\right) \;.$$

 Furthermore, given a Hermitian metric on $X$, there is a canonical isomorphism
 $$ H^{\bullet,\bullet}_{\delbar}\left(\tilde X\right) \;\simeq\; \ker \overline\square \;.$$

 In particular, the Hodge-$*$-operator induces an isomorphism
 $$ H^{\bullet_1,\bullet_2}_{\delbar}\left(\tilde X\right) \;\simeq\; H^{n-\bullet_1,n-\bullet_2}_{\delbar}\left(\tilde X\right) \;.$$
\end{thm}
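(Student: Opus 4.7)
The plan is to adapt the three standard ingredients of the compact K\"ahler Dolbeault theorem to the orbifold setting, using Baily's framework for elliptic theory on orbifolds, \cite{baily}. For the first isomorphism, the goal is to exhibit $(\mathcal{A}^{p,\bullet}_{\tilde X},\,\delbar)$ as a fine resolution of $\Omega^p_{\tilde X}$ and then invoke the standard \v{C}ech--sheaf formalism. Fineness follows because orbifolds carry smooth partitions of unity, obtained by lifting to manifold charts and averaging over the local isotropy groups. Exactness in positive bidegree reduces to a $G_x$-equivariant $\delbar$-Poincar\'e lemma on a chart $\C^n/G_x$: given a $G_x$-invariant $\delbar$-closed $(p,q)$-form $\alpha$ with $q\geq 1$, the classical Dolbeault lemma produces $\beta$ with $\delbar\beta=\alpha$, and averaging $\beta$ over $G_x$ yields a $G_x$-invariant primitive, since $G_x\subset\GL(n;\C)$ preserves the bi-type and commutes with $\delbar$. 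Because $\ker(\delbar\colon\mathcal{A}^{p,0}_{\tilde X}\to\mathcal{A}^{p,1}_{\tilde X})=\Omega^p_{\tilde X}$ by construction, the resolution is established, and the sheaf-cohomology isomorphism with $\check H^{\bullet_2}(\tilde X;\Omega^{\bullet_1}_{\tilde X})$ follows formally.

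For the harmonic-representative identification, I would appeal to Baily's decomposition theorem, \cite[Theorem D]{baily}, which provides an orthogonal Hodge-type decomposition associated to any \kth{2} order self-adjoint elliptic differential operator on a compact orbifold. The operator $\overline\square=\delbar\,\delbar^*+\delbar^*\,\delbar$ satisfies these hypotheses: self-adjointness and order are inherited from those of $\delbar$ and $\delbar^*$, while ellipticity is a local condition, verified by lifting to a chart $\C^n$ where the ordinary $\delbar$-Laplacian is elliptic and its symbol descends to the $G_x$-quotient. Baily's theorem then yields the orthogonal decomposition
$$\wedge^{p,q}\tilde X \;=\; \ker\overline\square \,\oplus\, \delbar\left(\wedge^{p,q-1}\tilde X\right) \,\oplus\, \delbar^*\left(\wedge^{p,q+1}\tilde X\right),$$
and standard diagram chasing identifies each Dolbeault class with its unique $\overline\square$-harmonic representative.

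The final Hodge-$*$ symmetry follows formally: $*_g$ is a pointwise $\C$-linear isomorphism which, by a computation identical to the smooth case, conjugates $\overline\square$ into the analogous second-order elliptic operator on the dual bidegree, and hence restricts to an isomorphism between the respective harmonic spaces; combined with the previous part, this transports to the cohomology symmetry in the stated bidegrees. The main subtlety in the whole argument is the local-to-global bookkeeping over orbifold charts --- fineness, the $\delbar$-Poincar\'e lemma, the elliptic decomposition, and the intertwining under $*_g$ all require checking that $G_x$-averaging interacts correctly with each construction --- but in every case the issue dissolves because the local isotropy groups $G_x\subset\GL(n;\C)$ act by biholomorphic linear automorphisms, which commute with every differential-geometric operator in play, reducing the matter to the classical smooth-manifold case.
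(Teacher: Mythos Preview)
The paper does not supply its own proof of this theorem: it is stated in the preliminaries as a cited result from Baily \cite{baily-2, baily}, with only the one-line remark preceding it that the harmonic-representative isomorphism is a consequence of Baily's general decomposition theorem \cite[Theorem D]{baily} for second-order self-adjoint elliptic operators on compact orbifolds. Your proposal correctly reconstructs the argument along exactly the lines one would expect from those sources and from the paper's hint --- the $G_x$-averaged Dolbeault--Grothendieck lemma to obtain a fine resolution of $\Omega^p_{\tilde X}$ (the paper itself uses this averaging trick verbatim in the Remark following Theorem~\ref{thm:bc}), Baily's Theorem~D applied to $\overline\square$ for the Hodge decomposition, and the Hodge-$*$ intertwining for the duality --- so there is nothing to compare against beyond noting that your sketch is sound and consonant with the paper's indication.

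One small point worth tightening: the $\C$-linear $*_g$ maps $\wedge^{p,q}\tilde X$ to $\wedge^{n-q,n-p}\tilde X$ and intertwines $\overline\square$ with $\square=\del\del^*+\del^*\del$, not with $\overline\square$ itself; to land in $H^{n-p,n-q}_{\delbar}$ as stated you need either to compose with complex conjugation (equivalently, use the conjugate-linear star $\bar*$, which does commute with $\overline\square$ and shifts bidegree by $(p,q)\mapsto(n-p,n-q)$) or to invoke the conjugation isomorphism $\ker\square^{r,s}\simeq\ker\overline\square^{s,r}$. Your phrase ``the analogous second-order elliptic operator on the dual bidegree'' glosses over this, though the underlying argument is of course correct.
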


\section{Bott-Chern cohomology of complex orbifolds of global-quotient type}
Compact complex orbifolds of the type $\tilde X = \left. X \right\slash G$, where $X$ is a compact complex manifold and $G$ is a finite group of biholomorphisms of $X$, constitute one of the simplest examples of singular manifolds: more precisely, in this section, we study the Bott-Chern cohomology for such orbifolds, proving that it can be defined using either currents or forms, or also by computing the $G$-invariant $\tilde \Delta_{BC}$-harmonic forms on $X$, Theorem \ref{thm:bc}.

\medskip

Consider
$$ \tilde X \;=\; \left. X \right\slash G \;,$$
where $X$ is a compact complex manifold and $G$ is a finite group of biholomorphisms of $X$: by the Bochner linearization theorem, \cite[Theorem 1]{bochner}, see also \cite[Theorem 1.7.2]{raissy-master-thesis}, $\tilde X$ turns out to be an orbifold as in I. Satake's definition.

Such orbifolds of global-quotient type have been considered and studied by D.~D. Joyce in constructing examples of compact $7$-dimensional manifolds with holonomy $G_2$, \cite{joyce-jdg-1-2-g2} and \cite[Chapters 11-12]{joyce-ext}, and examples of compact $8$-dimensional manifolds with holonomy ${\rm Spin}(7)$, \cite{joyce-invent, joyce-jdg-spin7} and \cite[Chapters 13-14]{joyce-ext}. See also \cite{fernandez-munoz, cavalcanti-fernandez-munoz} for the use of orbifolds of global-quotient type to construct a compact $8$-dimensional simply-connected non-formal symplectic manifold (which do not satisfy, respectively satisfy, the Hard Lefschetz condition), answering to a question by I.~K. Babenko and I.~A. Ta\u{\i}manov, \cite[Problem]{babenko-taimanov}.

Since $G$ is a finite group of biholomorphisms, the singular set of $\tilde X$ is
$$ \Sing\left(\tilde X\right) \;=\; \left\{x\,G\in\left.X\right\slash G 
\st x\in X \text{ and }g\cdot x=x \text{ for some }g\in G\setminus\{\id_X\}\right\} \;.$$

\medskip

We provide the following result, concerning Bott-Chern and Aeppli cohomologies of compact complex orbifolds of global-quotient type.

\begin{thm}\label{thm:bc}
 Let $\tilde X=\left.X \right\slash G$ be a compact complex orbifold of complex dimension $n$, where $X$ is a compact complex manifold and $G$ is a finite group of biholomorphisms of $X$.
 For any $p,q\in\N$, there is a canonical isomorphism
 \begin{equation}\label{eq:bc-orbifolds}
  H^{p,q}_{BC}\left(\tilde X\right) \;\simeq\; \frac{\ker\left(\del \colon \correnti^{p,q}\tilde X \to \correnti^{p+1,q}\tilde X\right) \cap \ker \left(\delbar\colon \correnti^{p,q}\tilde X \to \correnti^{p,q+1}\tilde X\right)}{\imm\left(\del\delbar \colon \correnti^{p-1,q-1}\tilde X \to \correnti^{p,q}\tilde X\right)} \;.
 \end{equation}

 Furthermore, given a Hermitian metric on $\tilde X$, there are canonical isomorphisms
 $$ H^{\bullet,\bullet}_{BC}\left(\tilde X\right) \;\simeq\; \ker \tilde\Delta_{BC} \qquad \text{ and } \qquad H^{\bullet,\bullet}_{A}\left(\tilde X\right) \;\simeq\; \ker\tilde\Delta_A \;. $$

 In particular, the Hodge-$*$-operator induces an isomorphism
 $$ H^{\bullet_1,\bullet_2}_{BC}\left(\tilde X\right) \;\simeq\; H^{n-\bullet_2,n-\bullet_1}_{A}\left(\tilde X\right) \;.$$
\end{thm}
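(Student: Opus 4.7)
The plan is to reduce every assertion to the corresponding statements on the compact complex manifold $X$ (Schweitzer's Hodge theory for Bott--Chern and Aeppli cohomologies, \cite[\S2]{schweitzer}), and then pass to $G$-invariants. By the definitions recalled in \S\ref{sec:orbifolds}, $\wedge^{\bullet,\bullet}\tilde X = \left(\wedge^{\bullet,\bullet}X\right)^G$ and $\correnti^{\bullet,\bullet}\tilde X = \left(\correnti^{\bullet,\bullet}X\right)^G$; moreover each $g\in G$ acts as a biholomorphism, so $g^*$ commutes with $\del$, $\delbar$ and $\del\delbar$. Hence the averaging projector $\mu:=\frac{1}{|G|}\sum_{g\in G}g^*$ is an idempotent chain map of bidegree $(0,0)$, both on forms and on currents, whose image is exactly the $G$-invariant subcomplex and which induces the identity on that subcomplex.

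For the first isomorphism, I would invoke Schweitzer's theorem on $X$ to get a canonical isomorphism between $H^{p,q}_{BC}(X)$ and its current analogue $\frac{\ker\del \cap \ker\delbar}{\imm\del\delbar}$ on $\correnti^{p,q}X$, induced by the natural inclusion $\wedge^{p,q}X\hookrightarrow \correnti^{p,q}X$. Since this inclusion is $G$-equivariant and $\mu$ commutes with $\del$, $\delbar$, $\del\delbar$, applying $\mu$ at the level of cocycles and coboundaries shows that the isomorphism restricts to an isomorphism between the $G$-invariant subspaces, which is exactly \eqref{eq:bc-orbifolds}.

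For the Hodge-theoretic identifications, fix a $G$-invariant Hermitian metric on $X$ (which exists by averaging any Hermitian metric on $X$): then $\del^*$, $\delbar^*$ are $G$-equivariant, and so are $\tilde\Delta_{BC}$ and $\tilde\Delta_A$. Schweitzer's decomposition on $X$ reads
$$
\wedge^{\bullet,\bullet}X \;=\; \ker\tilde\Delta_{BC} \,\oplus\, \imm \del\delbar \,\oplus\, \left(\imm\del^{*} + \imm\delbar^{*}\right)
$$
(orthogonal with respect to the $L^2$ inner product induced by the $G$-invariant metric), and analogously for $\tilde\Delta_A$. Each summand is $G$-stable since $\tilde\Delta_{BC}$, $\del\delbar$, $\del^*$, $\delbar^*$ commute with $g^*$ and the inner product is $G$-invariant; restricting to $G$-invariants therefore yields the orthogonal decomposition on $\tilde X$, from which $H^{\bullet,\bullet}_{BC}(\tilde X)\simeq \ker\tilde\Delta_{BC}\cap\wedge^{\bullet,\bullet}\tilde X$ and its Aeppli counterpart follow. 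The final duality statement is then immediate: the $\C$-linear Hodge-$*$ operator $*_g\colon \wedge^{p,q}X \to \wedge^{n-q,n-p}X$ is $G$-equivariant (as $g^*$ preserves the metric and the complex structure), and a direct algebraic computation shows $*_g\tilde\Delta_{BC} = \tilde\Delta_A*_g$ with the announced shift of bidegrees. It thus restricts to an isomorphism between the corresponding $G$-invariant kernels.

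The main obstacle I anticipate is the careful bookkeeping needed to check that Schweitzer's $4^\text{th}$ order elliptic theory really does descend cleanly to $G$-invariants in both the form and current versions: concretely, one must verify that $\tilde\Delta_{BC}$-harmonicity is preserved under $\mu$, that the complementary summands in the orthogonal decomposition are truly $G$-stable, and that the identification via $\mu$ is compatible with the isomorphism between forms and currents. Once $G$-equivariance of $\tilde\Delta_{BC}$, $\tilde\Delta_A$, $*_g$ and the $L^2$ pairing has been recorded, however, each of these reductions is essentially a formal averaging argument.
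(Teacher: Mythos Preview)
Your argument is correct. For the Hodge-theoretic identifications $H^{\bullet,\bullet}_{BC}(\tilde X)\simeq\ker\tilde\Delta_{BC}$, $H^{\bullet,\bullet}_{A}(\tilde X)\simeq\ker\tilde\Delta_{A}$ and the Hodge-$*$ duality, your approach coincides with the paper's: Schweitzer's orthogonal decomposition on $X$ is averaged over $G$ using that $g^*$ commutes with $\del$, $\delbar$, $\del^*$, $\delbar^*$ (hence with $\tilde\Delta_{BC}$, $\tilde\Delta_A$, $*_g$).

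For the forms-versus-currents isomorphism \eqref{eq:bc-orbifolds}, however, the paper takes a genuinely different route. Rather than invoking the manifold-level isomorphism $H^{p,q}_{BC}(X)\simeq H^{p,q}_{BC,\correnti}(X)$ and passing to $G$-invariants as you do, the paper works directly on $\tilde X$: it sets up a short exact sequence comparing $\frac{\ker\de\cap\correnti^{p,q}\tilde X}{\imm\del\delbar}$ with the de Rham cohomology of currents, uses Satake's and Baily's orbifold theorems for $H^\bullet_{dR}$ and $H^{\bullet,\bullet}_{\delbar}$, and then runs an inductive argument through the pure-type components of a primitive $\eta$ with $\de\eta=\omega^{p,q}$, replacing each $\eta^{r,s}$ by a smooth form up to $\delbar$- (or $\del$-) exact corrections. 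Your approach is shorter and conceptually cleaner, but it imports the full Schweitzer/Demailly forms-currents comparison on $X$ as a black box; the paper's argument is more self-contained within the orbifold literature, relying only on the classical orbifold de Rham and Dolbeault results. (The paper does sketch, in a subsequent remark, a sheaf-theoretic alternative via the complexes $\mathcal{L}^\bullet_{\tilde X\,p,q}$ and $\mathcal{M}^\bullet_{\tilde X\,p,q}$, which is closer in spirit to your reduction but still carried out directly on $\tilde X$ rather than by averaging the manifold result.)
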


\begin{proof}
We use the same argument as in the proof of \cite[Theorem 3.7]{angella-1} to show that, since the de Rham cohomology and the Dolbeault cohomology of $\tilde X$ can be computed using either differential forms or currents, the same holds true for the Bott-Chern and the Aeppli cohomologies.

Indeed, note that, for any $p,q\in\N$, one has the exact sequence
\begin{eqnarray*}
\lefteqn{0 \to \frac{\imm \left(\de \colon \left(\correnti^{p+q-1}\tilde X\otimes_\R\C\right)\to \left(\correnti^{p+q}\tilde X\otimes_\R\C\right)\right)\cap \correnti^{p,q}\tilde X}{\imm \left(\del\delbar\colon \correnti^{p-1,q-1}\tilde X \to \correnti^{p,q}\tilde X\right)} } \\[5pt]
 && \to \frac{\ker \left(\de\colon \correnti^{p,q}\tilde X\to \correnti^{p+1,q+1}\tilde X\right)}{\imm \left(\del\delbar\colon \correnti^{p-1,q-1}\tilde X\to \correnti^{p,q}\tilde X\right)}\to \frac{\ker \left(\de \colon \left(\correnti^{p+q}\tilde X\otimes_\R\C\right)\to \left(\correnti^{p+q+1}\tilde X \otimes_\R\C\right)\right)}{\imm \left(\de\colon \left(\correnti^{p+q-1}\tilde X \otimes_\R\C\right) \to \left(\correnti^{p+q}\tilde X \otimes_\R \C\right)\right)} \;,
\end{eqnarray*}
where the maps are induced by the identity.
By \cite[Theorem 1]{satake}, one has
$$ \frac{\ker \left(\de \colon \left(\correnti^{p+q}\tilde X\otimes_\R\C\right)\to \left(\correnti^{p+q+1}\tilde X \otimes_\R\C\right)\right)}{\imm \left(\de\colon \left(\correnti^{p+q-1}\tilde X \otimes_\R\C\right) \to \left(\correnti^{p+q}\tilde X \otimes_\R \C\right)\right)} \;\simeq\; \frac{\ker \left(\de \colon \left(\wedge^{p+q}\tilde X\otimes_\R\C\right)\to \left(\wedge^{p+q+1}\tilde X \otimes_\R\C\right)\right)}{\imm \left(\de\colon \left(\wedge^{p+q-1}\tilde X \otimes_\R\C\right) \to \left(\wedge^{p+q}\tilde X \otimes_\R \C\right)\right)} \;,$$
therefore it suffices to prove that the space
$$ \frac{\imm \left(\de \colon \left(\correnti^{p+q-1}\tilde X\otimes_\R\C\right)\to \left(\correnti^{p+q}\tilde X\otimes_\R\C\right)\right)\cap \correnti^{p,q}\tilde X}{\imm \left(\del\delbar\colon \correnti^{p-1,q-1}\tilde X \to \correnti^{p,q}\tilde X\right)}  $$
can be computed using just differential forms on $\tilde X$.

Firstly, we note that, since, by \cite[page 807]{baily-2},
$$ \frac{\ker \left(\delbar \colon \correnti^{\bullet,\bullet}\tilde X \to \correnti^{\bullet,\bullet+1}\tilde X\right)}{\imm \left(\delbar\colon \correnti^{\bullet,\bullet-1}\tilde X \to \correnti^{\bullet,\bullet}\tilde X \right)} \;\simeq\; \frac{\ker \left(\delbar \colon \wedge^{\bullet,\bullet}\tilde X \to \wedge^{\bullet,\bullet+1}\tilde X\right)}{\imm \left(\delbar\colon \wedge^{\bullet,\bullet-1}\tilde X \to \wedge^{\bullet,\bullet}\tilde X \right)} \;,$$
one has that, if $\psi\in\wedge^{r,s}\tilde X$ is a $\delbar$-closed differential form, then every solution $\phi\in\correnti^{r,s-1}$ of $\delbar\phi=\psi$ is a differential form up to $\delbar$-exact terms.
Indeed, since $[\psi]=0$ in $\frac{\ker\delbar\cap \correnti^{r,s}\tilde X}{\imm\delbar}$ and hence in $\frac{\ker\delbar\cap \wedge^{r,s}\tilde X}{\imm\delbar}$, there is a differential form $\alpha\in\wedge^{r,s-1}\tilde X$ such that $\psi=\delbar\alpha$. Hence, $\phi-\alpha\in\correnti^{r,s-1}\tilde X$ defines a class in $\frac{\ker\delbar \cap \correnti^{r,s-1}\tilde X}{\imm\delbar}\simeq \frac{\ker\delbar \cap \wedge^{r,s-1}\tilde X}{\imm\delbar}$, and hence $\phi-\alpha$ is a differential form up to a $\delbar$-exact form, and so $\phi$ is.

By conjugation, if $\psi\in\wedge^{r,s}\tilde X$ is a $\del$-closed differential form, then every solution $\phi\in\correnti^{r-1,s}$ of $\del\phi=\psi$ is a differential form up to $\del$-exact terms.

Now, let
$$ \omega^{p,q}\;=\;\de\eta\mod\imm\del\delbar\;\in\;\frac{\imm\de\cap\correnti^{p,q}X}{\imm\del\delbar} \;. $$
Decomposing $\eta=:\sum_{p,q}\eta^{p,q}$ in pure-type components, where $\eta^{p,q}\in\correnti^{p,q}\tilde X$, the previous equality is equivalent to the system
$$
\left\{
\begin{array}{cccccccc}
 && \del\eta^{p+q-1,0} &=&0 & \mod \imm\del\delbar && \\[5pt]
\delbar\eta^{p+q-\ell,\ell-1} &+& \del\eta^{p+q-\ell-1,\ell} &=& 0 &\mod\imm\del\delbar & \text{ for } & \ell\in\{1,\ldots,q-1\} \\[5pt]
\delbar\eta^{p,q-1} &+& \del\eta^{p-1,q} &=& \omega^{p,q} & \mod\imm\del\delbar && \\[5pt]
\delbar\eta^{\ell,p+q-\ell-1} &+& \del\eta^{\ell-1,p+q-\ell} &=& 0 &\mod\imm\del\delbar & \text{ for } & \ell\in\{1,\ldots,p-1\} \\[5pt]
\delbar\eta^{0,p+q-1} &&&=&0 &\mod\imm\del\delbar &&
\end{array}
\right. \;.
$$
By the above argument, we may suppose that, for $\ell\in\{0,\ldots, p-1\}$, the currents $\eta^{\ell,p+q-\ell-1}$ are differential form: indeed, they are differential form up to $\delbar$-exact terms, but $\delbar$-exact terms give no contribution in the system, which is modulo $\imm\del\delbar$. Analogously, we may suppose that, for $\ell\in\{0,\ldots,q-1\}$, the currents $\eta^{p+q-\ell-1,\ell}$ are differential form. Then we may suppose that $\omega^{p,q}=\delbar\eta^{p,q-1}+\del\eta^{p-1,q}$ is a differential form. Hence \eqref{eq:bc-orbifolds} is proven.

\smallskip

Now, we prove that, fixed a $G$-invariant Hermitian metric on $\tilde X$, the Bott-Chern cohomology of $\tilde X$ is isomorphic to the space of $\tilde\Delta_{BC}$-harmonic $G$-invariant forms on $X$. Indeed, since the elements of $G$ commute with $\del$, $\delbar$, $\del^*$, and $\delbar^*$, and hence with $\tilde\Delta_{BC}$, the following decomposition, \cite[Théorème 2.2]{schweitzer},
$$ \wedge^{\bullet,\bullet} X \;=\; \ker \tilde\Delta_{BC} \oplus \del\delbar\wedge^{\bullet-1,\bullet-1}X \oplus \left(\del^*\wedge^{\bullet+1,\bullet}X + \delbar^*\wedge^{\bullet,\bullet+1}X \right) $$
induces a decomposition
$$ \wedge^{\bullet,\bullet} \tilde X \;=\; \ker \tilde\Delta_{BC} \oplus \del\delbar\wedge^{\bullet-1,\bullet-1}\tilde X \oplus \left(\del^*\wedge^{\bullet+1,\bullet}\tilde X + \delbar^*\wedge^{\bullet,\bullet+1}\tilde X \right) \;; $$
more precisely, let $\alpha\in\wedge^{\bullet,\bullet}\tilde X$, that is, $\alpha$ is a $G$-invariant form on $X$; if $\alpha$ has a decomposition $\alpha=h_{\alpha}+\del\delbar\beta+\left(\del^*\gamma+\delbar^*\eta\right)$ with $h_\alpha,\beta,\gamma,\eta\in\wedge^{\bullet,\bullet}X$ such that $\tilde\Delta_{BC}h_\alpha=0$, then one has
\begin{eqnarray*}
\alpha \;=\; \frac{1}{\ord G}\sum_{g\in G}g^*\alpha &=& \left(\frac{1}{\ord G}\sum_{g\in G}g^*h_{\alpha}\right) +\del\delbar\left(\frac{1}{\ord G}\sum_{g\in G}g^*\beta\right) \\[5pt]
 && + \left(\del^*\left(\frac{1}{\ord G}\sum_{g\in G}g^*\gamma\right)+\delbar^*\left(\eta\frac{1}{\ord G}\sum_{g\in G}g^*\right)\right) \;,
\end{eqnarray*}
where $\frac{1}{\ord G}\sum_{g\in G}g^*h_{\alpha},\, \frac{1}{\ord G}\sum_{g\in G}g^*\beta,\, \frac{1}{\ord G}\sum_{g\in G}g^*\gamma,\, \eta\frac{1}{\ord G}\sum_{g\in G}g^*\in\wedge^{\bullet,\bullet}\tilde X$ and
$$ \tilde\Delta_{BC}\left(\frac{1}{\ord G}\sum_{g\in G}g^*h_{\alpha}\right) \;=\; \frac{1}{\ord G}\sum_{g\in G}g^*\left(\tilde\Delta_{BC}h_{\alpha}\right) \;=\; 0 \;.$$

As regards the Aeppli cohomology, one has the decomposition, \cite[\S2.c]{schweitzer},
$$ \wedge^{\bullet,\bullet}X \;=\; \ker\tilde\Delta_A \oplus \left(\del\wedge^{\bullet-1,\bullet}X + \delbar\wedge^{\bullet,\bullet-1} X\right) \oplus \left(\del\delbar\right)^*\wedge^{\bullet+1,\bullet+1} X \;,$$
and hence the decomposition
$$ \wedge^{\bullet,\bullet} \tilde X \;=\; \ker\tilde\Delta_A \oplus \left(\del\wedge^{\bullet-1,\bullet}\tilde X + \delbar\wedge^{\bullet,\bullet-1} \tilde X\right) \oplus \left(\del\delbar\right)^*\wedge^{\bullet+1,\bullet+1} \tilde X \;,$$
from which one gets the isomorphism $H^{\bullet,\bullet}_A\left(\tilde X\right)\simeq\ker\tilde\Delta_A$.

\smallskip

Finally, note that the Hodge-$*$-operator $*\colon \wedge^{\bullet_1,\bullet_2}\tilde X \to \wedge^{n-\bullet_2,n-\bullet_1}\tilde X$ sends $\tilde\Delta_{BC}$-harmonic forms to $\tilde\Delta_{A}$-harmonic forms, and hence it induces an isomorphism
$$ *\colon H^{\bullet_1,\bullet_2}_{BC}\left(\tilde X\right)\stackrel{\simeq}{\to} H^{n-\bullet_2,n-\bullet_1}_{A}\left(\tilde X\right) \;, $$
concluding the proof.
\end{proof}

\begin{rem}
 We note that another proof of the isomorphism
 $$ H^{p,q}_{BC}\left(\tilde X\right) \;\simeq\; \frac{\ker\left(\del \colon \correnti^{p,q}\tilde X \to \correnti^{p+1,q}\tilde X\right) \cap \ker \left(\delbar\colon \correnti^{p,q}\tilde X \to \correnti^{p,q+1}\tilde X\right)}{\imm\left(\del\delbar \colon \correnti^{p-1,q-1}\tilde X \to \correnti^{p,q}\tilde X\right)} \;, $$
 and a proof of the isomorphism
 $$ H^{p,q}_{A}\left(\tilde X\right) \;\simeq\; \frac{\ker\left(\del\delbar \colon \correnti^{p,q}\tilde X \to \correnti^{p+1,q+1}\tilde X\right)}{\imm\left(\del \colon \correnti^{p-1,q}\tilde X \to \correnti^{p,q}\tilde X\right) + \imm\left(\delbar \colon \correnti^{p,q-1}\tilde X \to \correnti^{p,q}\tilde X\right)} $$
 follow from the sheaf-theoretic interpretation of the Bott-Chern and Aeppli cohomologies, developed by J.-P. Demailly, \cite[\S V I.12.1]{demailly-agbook} and M. Schweitzer, \cite[\S4]{schweitzer}, see also \cite[\S3.2]{kooistra}.

 We recall that, for any $p,q\in\N$, the complex $\left(\mathcal{L}^\bullet_{\tilde X\, p,q},\, \de_{\mathcal{L}^\bullet_{\tilde X\, p,q}}\right)$ of sheaves is defined as
$$
\left(\mathcal{L}^\bullet_{\tilde X\, p,q},\, \de_{\mathcal{L}^\bullet_{\tilde X\, p,q}}\right) \;: \qquad
   \mathcal{A}^{0,0}_{\tilde X}
     \stackrel{\pr \circ \de}{\to}
   \bigoplus_{\substack{r+s=1 \\ r<p,\, s<q}} \mathcal{A}^{r,s}_{\tilde X}
     \to
   \cdots
     \stackrel{\pr \circ \de}{\to}
   \bigoplus_{\substack{r+s=p+q-2\\ r<p,\, s<q}} \mathcal{A}^{r,s}_{\tilde X}
     \stackrel{\del\delbar}{\to}
   \bigoplus_{\substack{r+s=p+q\\ r\geq p,\, s\geq q}} \mathcal{A}^{r,s}_{\tilde X}
     \stackrel{\de}{\to}
   \bigoplus_{\substack{r+s=p+q\\ r\geq p,\, s\geq q}} \mathcal{A}^{r,s}_{\tilde X}
     \to
   \cdots \;,
$$
and the complex $\left(\mathcal{M}^\bullet_{\tilde X\, p,q},\, \de_{\mathcal{M}^\bullet_{\tilde X\, p,q}}\right)$ of sheaves is defined as
$$
\left(\mathcal{M}^\bullet_{\tilde X\, p,q},\, \de_{\mathcal{M}^\bullet_{\tilde X\, p,q}}\right) \;: \qquad
   \mathcal{D}^{0,0}_{\tilde X}
     \stackrel{\pr \circ \de}{\to}
   \bigoplus_{\substack{r+s=1 \\ r<p,\, s<q}} \mathcal{D}^{r,s}_{\tilde X}
     \to
   \cdots
     \stackrel{\pr \circ \de}{\to}
   \bigoplus_{\substack{r+s=p+q-2\\ r<p,\, s<q}} \mathcal{D}^{r,s}_{\tilde X}
     \stackrel{\del\delbar}{\to}
   \bigoplus_{\substack{r+s=p+q\\ r\geq p,\, s\geq q}} \mathcal{D}^{r,s}_{\tilde X}
     \stackrel{\de}{\to}
   \bigoplus_{\substack{r+s=p+q\\ r\geq p,\, s\geq q}} \mathcal{D}^{r,s}_{\tilde X}
     \to
   \cdots \;,
$$
where $\pr$ denotes the projection onto the appropriate space.

Take $\phi$ a germ of a $\de$-closed $k$-form on $\tilde X$, with $k\in\N\setminus\{0\}$, that is, a germ of a $G$-invariant $k$-form on $X$; by the Poincaré lemma, see, e.g., \cite[I.1.22]{demailly-agbook}, there exists $\psi$ a germ of a $(k-1)$-form on $X$ such that $\phi=\de\psi$; since $\phi$ is $G$-invariant, one has
$$ \phi \;=\; \frac{1}{\ord G} \sum_{g\in G} g^*\phi \;=\; \frac{1}{\ord G} \sum_{g\in G} g^*\left(\de\psi\right) \;=\; \de\left(\frac{1}{\ord G} \sum_{g\in G} g^*\psi\right) \;, $$
that is, taking the germ of the $G$-invariant $(k-1)$-form
$$ \tilde\psi \;:=\; \frac{1}{\ord G} \sum_{g\in G} g^*\psi $$
on $X$, one gets a germ of a $(k-1)$-form on $\tilde X$ such that $\phi=\de\tilde\psi$. As regards the case $k=0$, one has straightforwardly that every ($G$-invariant) $\de$-closed function on $X$ is locally constant. The same argument applies for the sheaves of currents, by using the Poincaré lemma for currents, see, e.g., \cite[Theorem I.2.24]{demailly-agbook}.

Analogously, take $\phi$ a germ of a $\delbar$-closed $(p,q)$-form (respectively, bidimension-$(p,q)$-current) on $\tilde X$, with $q\in\N\setminus\{0\}$, that is, a germ of a $G$-invariant $(p,q)$-form (respectively, bidimension-$(p,q)$-current) on $X$; by the Dolbeault and Grothendieck lemma, see, e.g., \cite[I.3.29]{demailly-agbook}, there exists $\psi$ a germ of a $(p,q-1)$-form (respectively, bidimension-$(p,q-1)$-current) on $X$ such that $\phi=\delbar\psi$; since $\phi$ is $G$-invariant, one has
$$ \phi \;=\; \frac{1}{\ord G} \sum_{g\in G} g^*\phi \;=\; \frac{1}{\ord G} \sum_{g\in G} g^*\left(\delbar\psi\right) \;=\; \delbar\left(\frac{1}{\ord G} \sum_{g\in G} g^*\psi\right) \;, $$
that is, taking the germ of the $G$-invariant $(p,q-1)$-form (respectively, bidimension-$(p,q-1)$-current)
$$ \tilde\psi \;:=\; \frac{1}{\ord G} \sum_{g\in G} g^*\psi $$
on $X$, one gets a germ of a $(p,q-1)$-form (respectively, bidimension-$(p,q-1)$-current) on $\tilde X$ such that $\phi=\delbar\tilde\psi$. As regards the case $q=0$, one has that every ($G$-invariant) $\delbar$-closed bidimension-$(p,0)$-current on $X$ is locally a holomorphic $p$-form, see, e.g., \cite[I.3.29]{demailly-agbook}.

By the Poincaré lemma and the Dolbeault and Grothendieck lemma, one gets M. Schweitzer's lemma \cite[Lemme 4.1]{schweitzer}, which can be extended also to the context of orbifolds by using the same trick; this allows to prove that the map
$$ \left(\mathcal{L}^\bullet_{\tilde X\, p,q},\, \de_{\mathcal{L}^\bullet_{\tilde X\, p,q}}\right) \to \left(\mathcal{M}^\bullet_{\tilde X\, p,q},\, \de_{\mathcal{M}^\bullet_{\tilde X\, p,q}}\right) $$
of complexes of sheaves is a quasi-isomorphism, and hence, see, e.g., \cite[\S IV.12.6]{demailly-agbook}, for every $\ell\in\N$,
$$ \mathbb{H}^{\ell} \left(\tilde X;\,\left(\mathcal{L}^{\bullet}_{\tilde X\, p,q},\, \de_{\mathcal{L}^{\bullet}_{\tilde X\, p,q}}\right)\right) \;\simeq\; \mathbb{H}^{\ell} \left(\tilde X;\,\left(\mathcal{M}^{\bullet}_{\tilde X\, p,q},\, \de_{\mathcal{L}^{\bullet}_{\tilde X\, p,q}}\right)\right) \;.$$

Since, for every $k\in\N$, the sheaves $\mathcal{L}^k_{\tilde X\, p,q}$ and $\mathcal{M}^k_{\tilde X\, p,q}$ are fine (indeed, they are sheaves of $\left(\mathcal{C}^{\infty}_{\tilde X}\otimes_\R\C\right)$-modules over a paracompact space), one has, see, e.g., \cite[IV.4.19, (IV.12.9)]{demailly-agbook},
$$
\mathbb{H}^{p+q-1} \left(\tilde X;\,\left(\mathcal{L}^{\bullet}_{\tilde X\, p,q},\, \de_{\mathcal{L}^{\bullet}_{\tilde X\, p,q}}\right)\right) 
\;\simeq\;
\frac{\ker \left(\del\colon \wedge^{p,q}\tilde X \to \wedge^{p+1,q}\tilde X\right) \cap \ker \left(\delbar \colon \wedge^{p,q}\tilde X \to \wedge^{p,q+1}\tilde X \right)}{\imm\left(\del\delbar \colon \wedge^{p-1,q-1}\tilde X \to \wedge^{p,q}\tilde X\right)}
$$
and
$$
\mathbb{H}^{p+q-1} \left(\tilde X;\,\left(\mathcal{M}^{\bullet}_{\tilde X\, p,q},\, \de_{\mathcal{L}^{\bullet}_{\tilde X\, p,q}}\right)\right)
\;\simeq\;
\frac{\ker \left(\del\colon \correnti^{p,q}\tilde X \to \correnti^{p+1,q}\tilde X\right) \cap \ker \left(\delbar \colon \correnti^{p,q}\tilde X \to \correnti^{p,q+1}\tilde X \right)}{\imm\left(\del\delbar \colon \correnti^{p-1,q-1}\tilde X \to \correnti^{p,q}\tilde X\right)} \;,
$$
and
$$
\mathbb{H}^{p+q-2} \left(\tilde X;\,\left(\mathcal{L}^{\bullet}_{\tilde X\, p,q},\, \de_{\mathcal{L}^{\bullet}_{\tilde X\, p,q}}\right)\right)
\;\simeq\;
\frac{\ker \left(\del\delbar\colon \wedge^{p-1,q-1}\tilde X \to \wedge^{p,q}\tilde X\right)}{\imm\left(\del \colon \wedge^{p-2,q-1}\tilde X \to \wedge^{p-1,q-1}\tilde X\right) + \imm\left(\delbar \colon \wedge^{p-1,q-2}\tilde X \to \wedge^{p-1,q-1}\tilde X\right)} $$
and
$$
\mathbb{H}^{p+q-2} \left(\tilde X;\,\left(\mathcal{M}^{\bullet}_{\tilde X\, p,q},\, \de_{\mathcal{L}^{\bullet}_{\tilde X\, p,q}}\right)\right)
\;\simeq\;
\frac{\ker \left(\del\delbar\colon \correnti^{p-1,q-1}\tilde X \to \correnti^{p,q}\tilde X\right)}{\imm\left(\del \colon \correnti^{p-2,q-1}\tilde X \to \correnti^{p-1,q-1}\tilde X\right) + \imm\left(\delbar \colon \correnti^{p-1,q-2}\tilde X \to \correnti^{p-1,q-1}\tilde X\right)} \;,
$$
proving the stated isomorphisms.
\end{rem}

\section{Complex orbifolds satisfying the $\del\delbar$-Lemma}

We recall that a bounded double complex $\left(K^{\bullet,\bullet},\, \de',\, \de''\right)$ of vector spaces, whose associated simple complex is $\left(K^\bullet,\, \de\right)$ with $\de:=\de'+\de''$, is said \emph{to satisfy the $\de'\de''$-Lemma}, \cite{deligne-griffiths-morgan-sullivan}, if
$$ \ker\de'\cap\ker\de''\cap\imm\de=\imm\de'\de'' \;; $$
other equivalent conditions are provided in \cite[Lemma 5.15]{deligne-griffiths-morgan-sullivan}.

An orbifold $\tilde X$ is said \emph{to satisfy the $\del\delbar$-Lemma} if the double complex $\left(\wedge^{\bullet,\bullet}\tilde X,\, \del,\, \delbar\right)$ satisfies the $\del\delbar$-Lemma, that is, if every $\del$-closed $\delbar$-closed $\de$-exact form is $\del\delbar$-exact, namely, in other words, if the natural map $H^{\bullet,\bullet}_{BC}\left(\tilde X\right) \to H^\bullet_{dR}\left(\tilde X; \C\right)$ induced by the identity is injective.

Characterizations of compact complex manifolds satisfying the $\del\delbar$-Lemma in terms of their cohomological properties have been provided by P. Deligne, Ph. Griffiths, J. Morgan and D. Sullivan in \cite[Proposition 5.17, 5.21]{deligne-griffiths-morgan-sullivan}, and by the author and A. Tomassini in \cite[Theorem B]{angella-tomassini-3}. As a corollary of their characterization, P. Deligne, Ph. Griffiths, J. Morgan and D. Sullivan proved that, given $X$ and $Y$ compact complex manifolds of the same dimension and $f\colon X\to Y$ a holomorphic birational map, if $X$ satisfies the $\del\delbar$-Lemma, then also $Y$ satisfies the $\del\delbar$-Lemma, \cite[Theorem 5.22]{deligne-griffiths-morgan-sullivan}.

In this section, we extend \cite[Theorem 5.22]{deligne-griffiths-morgan-sullivan} to the case of orbifolds, by straightforwardly adapting a result by R.~O. Wells, \cite[Theorem 3.1]{wells}, to the orbifold case.

\begin{thm}[{see \cite[Theorem 3.1]{wells}}]\label{thm:wells-thm}
 Let $\tilde Y$ and $\tilde X$ be compact complex orbifolds of the same complex dimension, and let $\epsilon\colon \tilde Y\to \tilde X$ be a proper surjective morphism of complex orbifolds. Then the map $\epsilon\colon \tilde Y\to \tilde X$ induces injective maps
 $$
    \epsilon_{dR}^* \colon H^\bullet_{dR}\left(\tilde X;\R\right) \to H^\bullet_{dR}\left(\tilde Y;\R\right) \;,
     \qquad
    \epsilon_{\delbar}^*\colon H^{\bullet,\bullet}_{\delbar}\left(\tilde X\right) \to H^{\bullet,\bullet}_{\delbar}\left(\tilde Y\right) \;,
     \qquad \text{ and } \qquad
    \epsilon_{BC}^*\colon H^{\bullet,\bullet}_{BC}\left(\tilde X\right) \to H^{\bullet,\bullet}_{BC}\left(\tilde Y\right) \;.
 $$
\end{thm}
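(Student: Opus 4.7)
The plan is to adapt the argument of R.~O. Wells \cite[Theorem 3.1]{wells} by constructing a push-forward $\epsilon_*$ on currents that is a one-sided inverse to $\epsilon^*$ up to a non-zero scalar. Specifically, I would show that $\epsilon_*\circ\epsilon^* = d\cdot\id$ on each of the three cohomology theories, where $d:=\deg\epsilon\in\N\setminus\{0\}$ is the mapping degree; this is well-defined and positive because $\epsilon$ is proper, surjective, and $\dim_\C\tilde Y=\dim_\C\tilde X$, so its generic fibre consists of finitely many points. Multiplication by the invertible scalar $d$ immediately forces $\epsilon^*$ to be injective in all three cases.

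First, I would define $\epsilon_*$ at the level of currents by duality: for $T\in\correnti^{p,q}\tilde Y$ and $\phi$ a test form on $\tilde X$, set
$$ \langle \epsilon_* T,\, \phi\rangle \;:=\; \langle T,\, \epsilon^*\phi \rangle \;. $$
Properness of $\epsilon$ ensures $\epsilon^*\phi$ has compact support, while the averaging trick used in the proof of Theorem \ref{thm:bc} guarantees that the orbifold structure ($G$-invariance at the level of local uniformising covers) is respected. A direct calculation from the definition shows that $\epsilon_*$ preserves the bi-degree $(p,q)$ and commutes with both $\del$ and $\delbar$ (up to the usual sign), so it descends to maps on the de Rham, Dolbeault, and Bott-Chern cohomologies computed with currents. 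Via Theorem \ref{thm:bc}, together with the corresponding results of Satake--Baily for $H_{dR}$ and $H_{\delbar}$, these agree with the standard cohomology groups computed with forms.

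Next, I would verify the key identity $\epsilon_*\circ\epsilon^* = d\cdot\id$. There is a proper analytic subvariety $\Sigma\subset\tilde X$, containing $\Sing(\tilde X)$ and the branch locus, outside of which $\epsilon$ is a $d$-sheeted étale covering of complex orbifolds; in the local uniformising charts, $\Sigma$ has zero Lebesgue measure. On $\tilde X\setminus\Sigma$, for every smooth form $\phi$ on $\tilde X$ the identity $\epsilon_*\epsilon^*\phi=d\,\phi$ holds pointwise, since locally $\epsilon^*\phi$ decomposes as the sum of pullbacks through $d$ local sections of $\epsilon$. Because two currents on $\tilde X$ that agree off a set of measure zero are equal, the identity extends across $\Sigma$ to all of $\tilde X$, and the conclusion then passes to the cohomological level.

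The main obstacle I expect is the careful book-keeping of the orbifold degree near points where the isotropy groups of $\tilde Y$ and of $\tilde X$ do not coincide: naively, the sheet count would have to be weighted by the orders of local stabilisers, and a direct global computation is delicate. I would circumvent this by working entirely on the orbifold-regular locus $\tilde X\setminus\Sigma$, where no weighting appears, and invoking the extension of currents across the nowhere-dense set $\Sigma$ to draw the global conclusion — which is exactly the device employed by Wells in \cite{wells}. The averaging trick of Theorem \ref{thm:bc} ensures that passing between orbifold-level and $G$-invariant manifold-level representatives loses no information in this step.
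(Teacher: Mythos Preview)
Your proposal is correct and follows essentially the same route as the paper: define $\epsilon_*$ on currents by duality, remove the singular and branch loci to obtain a $\mu$-sheeted covering, verify $\epsilon_*\,T_\sspace\,\epsilon^*=\mu\cdot T_\sspace$ by an integral computation over the regular locus extended across a measure-zero set, and conclude injectivity from the fact that $T_\sspace$ is an isomorphism in cohomology (by Satake, Baily, and Theorem~\ref{thm:bc}). The only cosmetic difference is that the paper keeps the inclusion $T_\sspace\colon\wedge^{\bullet,\bullet}\to\correnti^{\bullet,\bullet}$ explicit in the identity, whereas you suppress it; this does not affect the argument.
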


\begin{proof}
We follow closely the proof of \cite[Theorem 3.1]{wells} and adapt it to the orbifold case.

\paragrafo{1}{Notations}
The morphism $\epsilon \colon \tilde Y \to \tilde X$ of complex orbifolds induces morphisms
$$
   \epsilon^* \colon \wedge^{\bullet} \tilde X \to \wedge^{\bullet} \tilde Y
    \qquad \text{ and } \qquad
   \epsilon^* \colon \wedge^{\bullet,\bullet} \tilde X \to \wedge^{\bullet,\bullet} \tilde Y
$$
of $\R$-vector spaces and $\C$-vector spaces,
and hence, by duality,
$$
   \epsilon_* \colon \correnti_\bullet \tilde Y \to \correnti_\bullet \tilde X
    \qquad \text{ and } \qquad
   \epsilon_* \colon \correnti_{\bullet,\bullet} \tilde Y \to \mathcal{D}_{\bullet,\bullet}\tilde X \;.
$$
Moreover, recall that, for $X\in\left\{\tilde X, \, \tilde Y\right\}$, there are natural inclusions
$$
   T_\sspace \colon \wedge^\bullet X \to \correnti^{\bullet}X:=:\correnti_{2n-\bullet} X
    \qquad \text{ and } \qquad
   T_\sspace \colon \wedge^{\bullet,\bullet} X \to \correnti^{\bullet,\bullet}X:=:\correnti_{n-\bullet,n-\bullet} X \;,
$$
where $n$ is the complex dimension of $X$.

Both $\epsilon^*$ and $\epsilon_*$ commute with $\de$, $\del$ and $\delbar$, and hence they induce morphisms of complexes
$$
   \epsilon_{dR}^* \colon \left(\wedge^\bullet \tilde X,\, \de\right) \to \left(\wedge^\bullet \tilde Y ,\, \de\right)
    \qquad \text{ and } \qquad
   \epsilon^{dR}_* \colon \left(\correnti^\bullet \tilde Y,\, \de\right) \to \left(\correnti^\bullet \tilde X ,\, \de\right) \;,
$$
and, for any $p\in\N$,
$$
   \epsilon_{\delbar}^* \colon \left(\wedge^{p,\bullet} \tilde X,\, \delbar\right) \to \left(\wedge^{p,\bullet} \tilde Y ,\, \delbar\right)
    \qquad \text{ and } \qquad
   \epsilon^{\delbar}_* \colon \left(\correnti^{p,\bullet} \tilde Y,\, \delbar\right) \to \left(\correnti^{p,\bullet} \tilde X ,\, \delbar\right) \;,
$$
and, for any $p,q\in\N$,
$$
   \epsilon_{BC}^* \colon \left(\wedge^{p-1,q-1} \tilde X \stackrel{\del\delbar}{\to} \wedge^{p,q} \tilde X \stackrel{\del + \delbar}{\to} \wedge^{p+1,q} \tilde X \oplus \wedge^{p,q+1} \tilde X \right) \to \left(\wedge^{p-1,q-1} \tilde Y \stackrel{\del\delbar}{\to} \wedge^{p,q} \tilde Y \stackrel{\del + \delbar}{\to} \wedge^{p+1,q} \tilde Y \oplus \wedge^{p,q+1} \tilde Y \right)
$$
and
$$
   \epsilon^{BC}_* \colon \left(\correnti^{p-1,q-1} \tilde Y \stackrel{\del\delbar}{\to} \correnti^{p,q} \tilde Y \stackrel{\del + \delbar}{\to} \correnti^{p+1,q} \tilde Y \oplus \wedge^{p,q+1} \tilde Y \right) \to \left(\correnti^{p-1,q-1} \tilde X \stackrel{\del\delbar}{\to} \correnti^{p,q} \tilde X \stackrel{\del + \delbar}{\to} \correnti^{p+1,q} \tilde X \oplus \correnti^{p,q+1} \tilde X \right) \;;
$$
hence, they induce morphisms between the corresponding cohomologies:
$$
 \epsilon_{dR}^* \colon H^\bullet_{dR}\left(\tilde X;\R\right) \to H^\bullet_{dR}\left(\tilde Y;\R\right) \;,
  \quad
 \epsilon_{\delbar}^*\colon H^{\bullet,\bullet}_{\delbar}\left(\tilde X\right) \to H^{\bullet,\bullet}_{\delbar}\left(\tilde Y\right) \;,
  \quad \text{ and } \quad
 \epsilon_{BC}^*\colon H^{\bullet,\bullet}_{BC}\left(\tilde X\right) \to H^{\bullet,\bullet}_{BC}\left(\tilde Y\right) \;.
$$

Recall that $T_\sspace$ commutes with $\de$, $\del$ and $\delbar$, and hence it induces, for $X\in\left\{\tilde X ,\, \tilde Y\right\}$, morphisms
$$ T_\sspace \colon \left(\wedge^\bullet X,\, \de\right) \to \left(\correnti^\bullet X,\, \de\right) \;,$$
and, for any $p\in\N$,
$$
   T_\sspace \colon \left(\wedge^{p,\bullet} X,\, \delbar\right) \to \left(\correnti^{p,\bullet} X ,\, \delbar\right) \;,
$$
and, for any $p,q\in\N$,
$$
   T_\sspace \colon \left(\wedge^{p-1,q-1} X \stackrel{\del\delbar}{\to} \wedge^{p,q} X \stackrel{\del + \delbar}{\to} \wedge^{p+1,q} X \oplus \wedge^{p,q+1} X \right) \to \left(\correnti^{p-1,q-1} X \stackrel{\del\delbar}{\to} \correnti^{p,q} X \stackrel{\del + \delbar}{\to} \correnti^{p+1,q} X \oplus \wedge^{p,q+1} X \right) \;;
$$
by \cite[Theorem 1]{satake}, by \cite[page 807]{baily-2}, and by Theorem \ref{thm:bc}, these maps are in fact quasi-isomorphisms.

\paragrafo{3}{It holds $\epsilon_*\, T_\sspace\, \epsilon^*=\mu \cdot T_\sspace$ for some $\mu\in\N\setminus\{0\}$}
Indeed, consider the diagrams
$$
\xymatrix{
 \wedge^\bullet \tilde Y  \ar[r]^{T_\sspace}  &   \correnti^\bullet \tilde Y \ar[d]^{\epsilon_*}  \\
 \wedge^\bullet \tilde X  \ar[r]_{T_\sspace} \ar[u]^{\epsilon^*}  &   \correnti^\bullet \tilde X 
} \;,
\qquad \text{ respectively } \qquad
\xymatrix{
 \wedge^{\bullet,\bullet} \tilde Y  \ar[r]^{T_\sspace}  &   \correnti^{\bullet,\bullet} \tilde Y \ar[d]^{\epsilon_*}  \\
 \wedge^{\bullet,\bullet} \tilde X  \ar[r]_{T_\sspace} \ar[u]^{\epsilon^*}  &   \correnti^{\bullet,\bullet} \tilde X 
} \;.
$$
One has that there exists a proper analytic subset $S_{\tilde Y}$ of $\tilde Y\setminus \Sing\left(\tilde Y\right)$ such that $S_{\tilde Y}$ has measure zero in $\tilde Y$ and
$$ \epsilon\lfloor_{\tilde Y \setminus \left(\Sing\left(\tilde Y\right) \cup S_{\tilde Y} \right)} \colon \tilde Y \setminus \left(\Sing\left(\tilde Y\right) \cup S_{\tilde Y} \right) \to \tilde X \setminus \left(\Sing\left(\tilde X\right) \cup \epsilon\left(S_{\tilde Y}\right)\right) $$
is a finitely-sheeted covering mapping of sheeting number $\mu\in\N\setminus\{0\}$.
Let $\mathcal{U}:=:\left\{U_\alpha\right\}_{\alpha\in\mathcal{A}}$ be an open covering of $\tilde X\setminus \left(\Sing\left(\tilde X\right) \cup \epsilon\left(S_{\tilde Y}\right)\right)$, and let $\left\{\rho_\alpha\right\}_{\alpha\in\mathcal{A}}$ be an associated partition of unity.
For every $\varphi, \psi\in\wedge^{\bullet,\bullet} \tilde X$, one has that
\begin{eqnarray*}
 \left\langle \epsilon_*\, T_\sspace\, \epsilon^* \varphi ,\, \psi \right\rangle &=& \left\langle T_\sspace \, \epsilon^* \varphi ,\, \epsilon^* \psi \right\rangle
 \;=\; \int_{\tilde Y} \epsilon^* \varphi \wedge \epsilon^* \psi
 \;=\; \int_{\tilde Y} \epsilon^* \left(\varphi\wedge\psi\right)
 \;=\; \int_{\tilde Y - \left(\Sing\left(\tilde Y\right) \cup S_{\tilde Y}\right) } \epsilon^* \left(\varphi\wedge\psi\right) \\[5pt]
 &=& \sum_{\alpha\in\mathcal{A}} \int_{\pi^{-1}\left(U_\alpha\right)} \epsilon^* \left(\rho_\alpha\left(\varphi\wedge\psi\right)\right) 
 \;=\; \sum_{\alpha\in\mathcal{A}} \sum_{\sharp\left\{U \in \mathcal{U} \st \pi^{-1}\left(U\right)=\pi^{-1}\left(U_\alpha\right)\right\}} \int_{U_\alpha} \rho_\alpha\left(\varphi\wedge\psi\right) \\[5pt]
 &=& \mu \cdot \int_{\tilde X - \left(\Sing\left(\tilde X\right) \cup \epsilon\left(S_{\tilde Y}\right)\right)} \varphi\wedge\psi
 \;=\; \mu \cdot \int_{\tilde X} \varphi\wedge\psi
 \;=\; \left\langle \mu \, T_\sspace \varphi ,\, \psi \right\rangle \;,
\end{eqnarray*}
and hence one gets that
$$ \epsilon_*\, T_\sspace\, \epsilon^* \;=\; \mu \cdot T_\sspace \;.$$

\paragrafo{4}{Conclusion}
Hence one has the diagrams
$$
\xymatrix{
 \frac{\ker \left(\de \colon \wedge^\bullet \tilde X \to \wedge^{\bullet+1}\tilde X\right)}{\imm \left(\de \colon \wedge^{\bullet-1} \tilde X \to \wedge^{\bullet}\tilde X\right)} \ar[r]^{T_\sspace}_{\simeq}  & \frac{\ker \left(\de \colon \correnti^\bullet \tilde X \to \correnti^{\bullet+1}\tilde X\right)}{\imm \left(\de \colon \correnti^\bullet \tilde X \to \correnti^{\bullet+1}\tilde X\right)} \ar[d]^{\epsilon^{dR}_*}  \\
 \frac{\ker \left(\de \colon \wedge^\bullet \tilde Y \to \wedge^{\bullet+1} \tilde Y\right)}{\imm \left(\de \colon \wedge^{\bullet-1} \tilde Y \to \wedge^{\bullet} \tilde Y\right)} \ar[r]_{T_\sspace}^{\simeq} \ar[u]^{\epsilon_{dR}^*}  &  \frac{\ker \left(\de \colon \correnti^\bullet \tilde Y \to \correnti^{\bullet+1} \tilde Y\right)}{\imm \left(\de \colon \correnti^\bullet \tilde Y \to \correnti^{\bullet+1} \tilde Y\right)}
} \;,
$$
such that
$$ \epsilon^{dR}_* \, T_\sspace \, \epsilon_{dR}^* \;=\; \mu \cdot T_\sspace \;,$$
and
$$
\xymatrix{
 \frac{\ker \left(\delbar \colon \wedge^{\bullet,\bullet} \tilde X \to \wedge^{\bullet,\bullet+1}\tilde X\right)}{\imm \left(\delbar \colon \wedge^{\bullet,\bullet-1} \tilde X \to \wedge^{\bullet,\bullet} \tilde X\right)} \ar[r]^{T_\sspace}_{\simeq}  & \frac{\ker \left(\delbar \colon \correnti^{\bullet,\bullet} \tilde X \to \correnti^{\bullet,\bullet+1}\tilde X\right)}{\imm \left(\de \colon \correnti^{\bullet,\bullet-1} \tilde X \to \correnti^{\bullet,\bullet}\tilde X\right)} \ar[d]^{\epsilon^{\delbar}_*}  \\
 \frac{\ker \left(\delbar \colon \wedge^{\bullet,\bullet} \tilde Y \to \wedge^{\bullet,\bullet+1} \tilde Y\right)}{\imm \left(\delbar \colon \wedge^{\bullet,\bullet-1} \tilde Y \to \wedge^{\bullet,\bullet} \tilde Y\right)} \ar[r]_{T_\sspace}^{\simeq} \ar[u]^{\epsilon_{\delbar}^*}  &  \frac{\ker \left(\delbar \colon \correnti^{\bullet,\bullet} \tilde Y \to \correnti^{\bullet,\bullet+1} \tilde Y\right)}{\imm \left(\delbar \colon \correnti^{\bullet,\bullet-1} \tilde Y \to \correnti^{\bullet,\bullet} \tilde Y\right)}
} \;,
$$
such that
$$ \epsilon^{\delbar}_* \, T_\sspace \, \epsilon_{\delbar}^* \;=\; \mu \cdot T_\sspace \;,$$
and
$$
\xymatrix{
 \frac{\ker \left(\del\delbar \colon \wedge^{\bullet,\bullet} \tilde X \to \wedge^{\bullet+1,\bullet+1}\tilde X\right)}{\imm \left(\del \colon \wedge^{\bullet-1,\bullet} \tilde X \to \wedge^{\bullet,\bullet} \tilde X\right) + \imm \left(\delbar \colon \wedge^{\bullet,\bullet-1} \tilde X \to \wedge^{\bullet,\bullet} \tilde X\right)} \ar[r]^{T_\sspace}_{\simeq}  & \frac{\ker \left(\del\delbar \colon \correnti^{\bullet,\bullet} \tilde X \to \correnti^{\bullet+1,\bullet+1}\tilde X\right)}{\imm \left(\del \colon \correnti^{\bullet-1,\bullet} \tilde X \to \correnti^{\bullet,\bullet} \tilde X\right) + \imm \left(\de \colon \correnti^{\bullet,\bullet-1} \tilde X \to \correnti^{\bullet,\bullet}\tilde X\right)} \ar[d]^{\epsilon^{BC}_*}  \\
 \frac{\ker \left(\del\delbar \colon \wedge^{\bullet,\bullet} \tilde Y \to \wedge^{\bullet+1,\bullet+1} \tilde Y\right)}{\imm \left(\del \colon \wedge^{\bullet-1,\bullet} \tilde Y \to \wedge^{\bullet,\bullet} \tilde Y\right) + \imm \left(\delbar \colon \wedge^{\bullet,\bullet-1} \tilde Y \to \wedge^{\bullet,\bullet} \tilde Y\right)} \ar[r]_{T_\sspace}^{\simeq} \ar[u]^{\epsilon_{BC}^*}  &  \frac{\ker \left(\del\delbar \colon \correnti^{\bullet,\bullet} \tilde Y \to \correnti^{\bullet+1,\bullet+1} \tilde Y\right)}{\imm \left(\del \colon \correnti^{\bullet-1,\bullet} \tilde Y \to \correnti^{\bullet,\bullet} \tilde Y\right) + \imm \left(\delbar \colon \correnti^{\bullet,\bullet-1} \tilde Y \to \correnti^{\bullet,\bullet} \tilde Y\right)}
} \;,
$$
such that
$$ \epsilon^{BC}_* \, T_\sspace \, \epsilon_{BC}^* \;=\; \mu \cdot T_\sspace \;. $$
Since $T_\sspace$ are isomorphisms in cohomology, one gets that
$$
    \epsilon_{dR}^* \colon H^\bullet_{dR}\left(\tilde X;\R\right) \to H^\bullet_{dR}\left(\tilde Y;\R\right) \;,
     \qquad
    \epsilon_{\delbar}^*\colon H^{\bullet,\bullet}_{\delbar}\left(\tilde X\right) \to H^{\bullet,\bullet}_{\delbar}\left(\tilde Y\right) \;,
     \qquad \text{ and } \qquad
    \epsilon_{BC}^*\colon H^{\bullet,\bullet}_{BC}\left(\tilde X\right) \to H^{\bullet,\bullet}_{BC}\left(\tilde Y\right) \;.
$$
are injective.
\end{proof}

\begin{cor}\label{cor:deldelbar-lemma}
 Let $\tilde Y$ and $\tilde X$ be compact complex orbifolds of the same dimension, and let $\epsilon\colon \tilde Y\to \tilde X$ be a proper surjective morphism of complex orbifolds. If $\tilde Y$ satisfies the $\del\delbar$-Lemma, then also $\tilde X$ satisfies the $\del\delbar$-Lemma.
\end{cor}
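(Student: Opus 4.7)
The plan is to derive the corollary as a direct consequence of Theorem \ref{thm:wells-thm}, by exploiting the characterization of the $\del\delbar$-Lemma in terms of the injectivity of the natural map from Bott-Chern to de Rham cohomology, as recalled right before the statement of Theorem \ref{thm:wells-thm}.

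First, I would translate the hypothesis and the conclusion into the injectivity statement. Namely, $\tilde Y$ satisfying the $\del\delbar$-Lemma means that the natural map
$$ \iota_{\tilde Y} \colon H^{\bullet,\bullet}_{BC}\left(\tilde Y\right) \to H^\bullet_{dR}\left(\tilde Y;\C\right) $$
induced by the identity is injective; and proving the conclusion amounts to showing that the analogous map
$$ \iota_{\tilde X} \colon H^{\bullet,\bullet}_{BC}\left(\tilde X\right) \to H^\bullet_{dR}\left(\tilde X;\C\right) $$
is injective.

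Next, I would invoke the naturality of these maps with respect to the morphism $\epsilon \colon \tilde Y \to \tilde X$ of complex orbifolds. Since $\epsilon^*$ commutes with $\del$, $\delbar$, and $\de$ at the level of forms, one obtains a commutative diagram
$$
\xymatrix{
H^{\bullet,\bullet}_{BC}\left(\tilde X\right) \ar[r]^{\iota_{\tilde X}} \ar[d]_{\epsilon_{BC}^*}  &  H^\bullet_{dR}\left(\tilde X;\C\right) \ar[d]^{\epsilon_{dR}^*}  \\
H^{\bullet,\bullet}_{BC}\left(\tilde Y\right) \ar[r]_{\iota_{\tilde Y}}  &  H^\bullet_{dR}\left(\tilde Y;\C\right)
}
$$
so that $\iota_{\tilde Y} \circ \epsilon_{BC}^* = \epsilon_{dR}^* \circ \iota_{\tilde X}$.

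Finally, by Theorem \ref{thm:wells-thm}, the vertical map $\epsilon_{BC}^*$ is injective, and by hypothesis the bottom horizontal map $\iota_{\tilde Y}$ is injective; hence the composition $\iota_{\tilde Y} \circ \epsilon_{BC}^*$ is injective, and therefore so is $\epsilon_{dR}^* \circ \iota_{\tilde X}$, which forces $\iota_{\tilde X}$ itself to be injective. This yields the $\del\delbar$-Lemma for $\tilde X$. There is no real obstacle here, as all the substantial work has been carried out in Theorem \ref{thm:wells-thm}; the corollary is a purely formal diagram-chase, exactly as in the manifold case of \cite[Theorem 5.22]{deligne-griffiths-morgan-sullivan}.
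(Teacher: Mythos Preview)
Your proof is correct and is essentially the same as the paper's: both set up the commutative square relating $\epsilon^*_{BC}$, $\epsilon^*_{dR}$, and the natural maps $H^{\bullet,\bullet}_{BC}\to H^{\bullet}_{dR}$, then use the injectivity of $\epsilon^*_{BC}$ from Theorem~\ref{thm:wells-thm} together with the hypothesis on $\tilde Y$ to conclude injectivity for $\tilde X$. The only cosmetic difference is that the paper also records the injectivity of $\epsilon^*_{dR}$, which is not actually needed for the diagram chase.
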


\begin{proof}
One has the commutative diagram
$$
\xymatrix{
 H^{\bullet,\bullet}_{BC}\left(\tilde X\right) \ar[r]^{\epsilon^*_{BC}}_{\text{1:1}} \ar[d]^{\id^*_{\tilde X}} & H^{\bullet,\bullet}_{BC}\left(\tilde Y\right) \ar[d]^{\id^*_{\tilde Y}}_{\text{1:1}} \\
 H^{\bullet}_{dR}\left(\tilde X;\C\right) \ar[r]^{\epsilon^*_{dR}}_{\text{1:1}} & H^{\bullet}_{dR}\left(\tilde Y;\C\right)
}
$$
where $\id^*_{\tilde X} \colon H^{\bullet,\bullet}_{BC}\left(\tilde X\right) \to H^{\bullet}_{dR}\left(\tilde X;\C\right)$ and $\id^*_{\tilde Y} \colon H^{\bullet,\bullet}_{BC}\left(\tilde Y\right) \to H^{\bullet}_{dR}\left(\tilde Y;\C\right)$ are the natural maps induced in cohomology by the identity. Since $\id^*_{\tilde Y} \colon H^{\bullet,\bullet}_{BC}\left(\tilde Y\right) \to H^{\bullet}_{dR}\left(\tilde Y;\C\right)$ is injective by the assumption that $\tilde Y$ satisfies the $\del\delbar$-Lemma, and $\epsilon^*_{BC} \colon H^{\bullet,\bullet}_{BC}\left(\tilde X\right) \to H^{\bullet,\bullet}_{BC}\left(\tilde Y\right)$ and $\epsilon^*_{dR} \colon H^{\bullet}_{dR}\left(\tilde X;\C\right) \to H^{\bullet}_{dR}\left(\tilde Y;\C\right)$ are injective by Theorem \ref{thm:wells-thm}, we get that also $\id^*_{\tilde X} \colon H^{\bullet,\bullet}_{BC}\left(\tilde X\right) \to H^{\bullet}_{dR}\left(\tilde X;\C\right)$ is injective, and hence $\tilde X$ satisfies the $\del\delbar$-Lemma.
\end{proof}

\end{document}